\documentclass[11pt,a4paper]{article}
\usepackage[T1]{fontenc}
\usepackage{lmodern}
\usepackage[margin=27mm]{geometry}
\usepackage{amsmath,amssymb,amsthm,mathtools}
\usepackage{microtype}
\usepackage{xcolor}
\usepackage{comment}
\usepackage[colorlinks=true,linkcolor=blue!50!black,urlcolor=blue!50!black,citecolor=blue!50!black]{hyperref}
\newtheorem{theorem}{Theorem}
\newtheorem{lemma}[theorem]{Lemma}
\newtheorem{proposition}[theorem]{Proposition}
\newtheorem{conjecture}[theorem]{Conjecture}
\theoremstyle{remark}
\newtheorem{remark}[theorem]{Remark}
\newtheorem{problem}[theorem]{Problem}
\newcommand{\C}{\mathbb C}
\newcommand{\Z}{\mathbb Z}

\newcommand{\OO}{\mathcal O}

\newcommand{\HH}{\mathcal H}
\newcommand{\dd}{\,d}
\DeclareMathOperator{\Rea}{Re}

\title{Pseudoconvex domains in complex Euclidean space with finite-dimensional Bergman spaces}
\author{Yuan Yuan}
\date{}
\begin{document}
\maketitle
\vspace{-1.5em}

\begin{abstract}
We construct a pseudoconvex domain in $\C^2$ such that the Bergman space is one-dimensional. Based on this example, for any integers $n \geq 3, d \geq 1$, 
 we further construct a pseudoconvex domain in $\C^n$ such that the Bergman space is $d$-dimensional. 
This disproves Wiegerinck's conjecture. 
\end{abstract}

\section{Introduction}\label{sec:introduction}
For a domain $U\subset\C^n$, the Bergman space is the Hilbert space
\[
 A^2(U)=\left\{f\in\mathcal O(U):\int_U|f|^2\dd V_{2n}<\infty\right\},
\]
where $dV_{2n}$ denotes Euclidean Lebesgue measure. If $U$ is bounded,
all holomorphic polynomials belong to $A^2(U)$, so this space is
infinite dimensional. At the other extreme, $A^2(\C^n)=\{0\}$ by the mean-value inequality. 

In 1984, Wiegerinck considered the problem {\it whether a domain in $\C^n$ can admit a finite-dimensional Bergman space} 
\cite{Wiegerinck}. 
Wiegerinck constructed non-pseudoconvex domains
in $\C^2$ with any positive finite-dimension for the Bergman spaces \cite{Wiegerinck}.
Wiegerinck also
established the zero-or-infinity
dichotomy for the Bergman space on
planar domains \cite{Wiegerinck}. More precisely, if $U$ be a planar domain, then $A^2(U)$ is either equal to $\{0\}$ or of infinitely dimensional. 
These two results isolate
pseudoconvexity as a central issue in higher dimensions. 
With the additional assumption of pseudoconvexity, the problem was stated precisely in \cite{JP05} in 2005 and subsequently became known in the literature as Wiegerinck’s conjecture.
More precisely, \emph{Wiegerinck's conjecture} asserts that for a pseudoconvex domain $U\subset\C^n$, $\dim_\C A^2(U)\in\{0,\infty\}$. 
The importance of Wiegerinck’s conjecture comes from the interaction of
holomorphic convexity, global $L^2$ integrability, and intrinsic complex
geometry. Pseudoconvexity controls holomorphic extension and supports
powerful solutions of the $\bar\partial$ equation. A positive answer would impose a strong general
constraint on unweighted $L^2$ function theory, while a finite-dimensional
example identifies a limit of what pseudoconvexity alone guarantees.

Wiegerinck’s conjecture has been extensively studied, with substantial progress toward an affirmative answer. Engliš established the conjecture for pseudoconvex Reinhardt domains \cite{E07}, and Pflug and Zwonek proved it for pseudoconvex balanced domains in \(\mathbb C^2\) \cite{PZ17}. Sufficient conditions for infinite dimensionality of the Bergman space on Hartogs domains were obtained in \cite{J12, B21}. Further sufficient conditions were established in \cite{D07, PZ07, GHH, BZ20} using classical tools from complex analysis, including \(L^2\) methods and potential theory. More recently, Fornæss and Pal revealed connections between the conjecture and the classical union problem \cite{FP22, FP25}. Analogues of Wiegerinck’s problem on complex manifolds have also been investigated in \cite{S22, GGV, KKS}.

On the other hand, the Bergman space determines the Bergman kernel and, when the associated Hermitian form is positive definite, the Bergman metric.
Several decades after Wiegerinck constructed a Reinhardt domain with a nontrivial finite-dimensional Bergman space, Huang and Li revisited related problems from a geometric perspective, motivated by their study of Bergman metrics with constant positive holomorphic sectional curvature. Without assuming pseudoconvexity, they constructed the first Reinhardt domain in \(\mathbb{C}^2\) whose Bergman metric is well defined and has constant positive holomorphic sectional curvature.
The computations in Wiegerinck's paper \cite{Wiegerinck} and Huang–Li's paper \cite{HL} both rely on the classical method of determining Bergman spaces and kernels on Reinhardt domains through the orthogonality and square-integrability of holomorphic monomials. Apart from  this shared standard framework, however, their domain constructions and integrability arguments differ substantially. In particular, they impose different conditions on the exponents of square-integrable monomials, yielding fundamentally different Bergman geometries.
The Huang–Li construction has inspired much  subsequent work on positively curved Bergman metrics  ( see, \cite{HK26, Y26, BGNX}). For instance, Hayashida and Kamimoto \cite{HK26} obtained further results in higher dimensions, motivated by this line of investigation.

Among the many striking results established by Huang and Li \cite{HL}, we highlight their theorem asserting that a well-defined Bergman metric with constant positive holomorphic sectional curvature forces the Bergman space to be finite-dimensional (Theorem 1.1 in \cite{HL}). This remarkable result reveals a deep connection between the dimension of the Bergman space, curvature, projective immersions, and the uniformization of complex manifolds. It also brings Wiegerinck's conjecture into a new geometric perspective. Motivated by these connections, in their breakthrough paper \cite{HL}, Huang and Li proposed the following conjecture:

\begin{conjecture}[Huang-Li \cite{HL}]
Let $U$ be a pseudoconvex domain in $\C^n$ with well-defined Bergman metric (equivalently, the Bergman space is base-point free and separates holomorphic directions). Then the Bergman metric  on $U$ cannot have positive constant holomorphic sectional curvature. 
\end{conjecture}

By Theorem 1.1 in \cite{HL}, Wiegerinck's conjecture would imply this conjecture of Huang and Li.
Huang and Li subsequently constructed Stein manifolds of complex dimension $n$, embedded in $\C^{n+1}$, whose Bergman metrics are well-defined and have positive constant holomorphic sectional curvature \cite{HL2026}. 

\medskip

In the present paper, we construct, for every \(n \geq 2\), a pseudoconvex domain in \(\mathbb{C}^n\) whose Bergman space is one-dimensional, which in turns yields a pseudoconvex domain in \(\mathbb{C}^N\) with Bergman space of dimension $d$ for any integers $N \geq 3, d \geq 1$.

\begin{theorem}\label{thm:main1}
There exists a pseudoconvex domain $\Omega\subset\C^2$ and a nowhere
vanishing $f_0\in\OO(\Omega)$ with finite $L^2$-norm such that the Bergman space  $ A^2(\Omega)=\C f_0$. 
\end{theorem}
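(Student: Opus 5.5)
We will construct $\Omega$ as a Hartogs-type domain over a planar base, exploiting the fact that pseudoconvexity of a Hartogs domain is governed by subharmonicity of its defining weight. Concretely, we take
\[
\Omega=\{(z,w)\in D\times\C : |w|<e^{-\varphi(z)}\},
\]
where $D\subset\C$ is a planar domain (possibly all of $\C$ minus a closed set) and $\varphi$ is subharmonic on $D$. Then $\Omega$ is pseudoconvex, because $-\log|w|-\varphi(z)$ is plurisuperharmonic. This gives exactly the condition for Hartogs domains over pseudoconvex bases, and every planar domain is pseudoconvex.

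Expanding $f\in A^2(\Omega)$ as $f=\sum_{k\ge0} a_k(z)w^k$ and integrating in $w$ gives
\[
\|f\|^2=\pi\sum_k \frac{1}{k+1}\int_D |a_k|^2 e^{-2(k+1)\varphi}\,dV_2 .
\]
So $A^2(\Omega)$ is the orthogonal sum of the weighted planar Bergman spaces $A^2(D,e^{-2(k+1)\varphi})$. The target is then: choose $D,\varphi$ so that the $k=0$ space is one-dimensional, spanned by a nonvanishing $a_0$ (so $f_0=a_0(z)$ is nowhere zero on $\Omega$), and all spaces with $k\ge1$ vanish.

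The weights $e^{-2(k+1)\varphi}$ become \emph{larger} as $k$ grows when $\varphi<0$, so we want $\varphi\to-\infty$ somewhere, making the higher spaces trivial, while $\varphi$ is not too negative elsewhere, so that the $k=0$ space survives. A natural attempt is $D=\C^*$ or $D=\C$ with $\varphi(z)=-\tfrac12\log(1+|z|^2)\cdot c$ plus a correction. But $\varphi$ must be subharmonic, and a subharmonic function on $\C$ bounded above is constant, so some growth is forced. I would instead take $\varphi=\lambda\log|z|+\psi$ on $\C^*$, or a sum of logarithmic poles $\sum c_j\log|z-p_j|$ with an infinite discrete set of poles accumulating only at $\infty$. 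Each summand $c_j\log|z-p_j|$ with $c_j>0$ is harmonic off $p_j$, and near $p_j$ we have $e^{-2(k+1)c_j\log|z-p_j|}=|z-p_j|^{-2(k+1)c_j}$. Nonintegrability there forces $a_k$ to vanish to order $\ge (k+1)c_j-1$ at $p_j$. Choosing $c_j$ slightly below $1$ (say $c_j\in(1/2,1)$) permits nonvanishing $a_0$ locally but forces $a_k$ ($k\ge1$) to vanish at every $p_j$. The growth at infinity of the weight $e^{-2(k+1)\varphi}$, which decays like $|z|^{-2(k+1)\sum c_j}$-type behaviour, must then be tuned so that an entire function vanishing at all $p_j$ with the prescribed orders cannot be $L^2$, via a Jensen/counting-function argument. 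Meanwhile a specific nonvanishing function (e.g.\ a constant, or $e^{g}$) must be in $A^2(D,e^{-2\varphi})$. Uniqueness at level $k=0$ also requires that every $a_0$ in that space be a multiple of $f_0$. I would get this by showing $a_0/f_0$ is entire with growth restricted, via subharmonic mean-value estimates, to a polynomial of degree $<1$, hence constant.

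\textbf{Main obstacle.} The hard step is balancing the two requirements with one weight. The $k=0$ weighted space must be exactly one-dimensional, which needs a Liouville-type rigidity: one more degree of freedom would already allow $z\,a_0$. Yet the same $\varphi$, scaled by $k+1\ge2$, must kill everything, which needs the weighted integral to diverge for every nonzero holomorphic function. I would first try choosing the pole masses so that $\varphi(z)\approx \log|z|$ on average at infinity, i.e.\ total Riesz mass growing slowly. Then $e^{-2\varphi}\approx|z|^{-2}$ admits only bounded functions (constants) marginally in $L^2$, provided the local singularities are arranged to make constants integrable. The doubled weights must be shown to fail through zero-counting against Jensen's formula. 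If that marginal balance fails, I would replace $\C$ by a base with a removed closed set and redo the estimate.
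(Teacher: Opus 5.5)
Your proposal is a search strategy, not a proof. $D$ and $\varphi$ are never fixed, and the step you call the main obstacle is the whole content of the theorem: a single weight with $\dim A^2(D,e^{-2\varphi})=1$ and $A^2(D,e^{-2(k+1)\varphi})=0$ for all $k\ge1$. For your main choice $D=\C$ this cannot be achieved at all. By Jucha's theorem \cite{J12}, a Hartogs domain $\{|w|<e^{-\varphi(z)}\}$ over $\C$ with $\varphi$ subharmonic always has Bergman space $\{0\}$ or infinite-dimensional.

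Your own pole model shows why. For $\varphi=\sum_{j=1}^N c_j\log|z-p_j|$, the level-$k$ space consists of the polynomials $\prod_j(z-p_j)^{\lfloor (k+1)c_j\rfloor}Q$ with $\deg Q<s_k-1$, where $s_k=\sum_j\{(k+1)c_j\}$ is a sum of fractional parts. So its dimension is $\max(0,\lceil s_k\rceil-1)$: the forced zeros are exactly paid for by the extra decay at infinity, up to fractional parts. If level $0$ contributes ($s_0>1$), Dirichlet's simultaneous approximation gives infinitely many $n$ with every $nc_j$ within $\epsilon$ of $\Z$. For small $\epsilon$ this gives $s_n>1$, so infinitely many levels are nonzero; the rounding you hoped would kill $k\ge1$ always recurs. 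Also, with $\varphi\approx\log|z|$ at infinity, constants are \emph{not} in $L^2(|z|^{-2}dA)$ near $\infty$. The fallback ``replace $\C$ by a base with a removed closed set'' is not an argument.

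The missing idea, which the paper uses, is to abandon planar bases with disc fibers. Finiteness comes instead from a flat unitary twist over a compact curve of positive genus. The paper takes $X=E\setminus\{[0]\}$ and $\psi=\Rea(\zeta(t)-\pi t)$, which is harmonic on $X$; its conjugate has period $-2\pi$ along the $i$-cycle and no period around the puncture. The domain is the annular-fibered $B=\{1<|u|e^{\sqrt2\psi(x)}<2\}$. Write an $L^2$ holomorphic $2$-form as $F\,\omega\wedge du/u$ and expand in $v=e^{\sqrt2H}u$. The Laurent coefficients satisfy $c_k(t+1)=c_k(t)$ and $c_k(t+i)=e^{2\pi i\sqrt2k}c_k(t)$, and they are $L^2$ near the puncture. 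Hence they extend to bounded entire functions, are constant, and vanish for $k\neq0$ since $\sqrt2\notin\mathbb Q$. This vanishing is exact, with no fractional-part bookkeeping that could recur, because the monodromy sits on a non-separating cycle. The survivor $c_0$ gives $\C\,\omega\wedge du/u$, mirroring $H^0(E,K_E)=\C\omega$. Pseudoconvexity comes from the exhaustion $\tau+\chi(\log|u|+\sqrt2\psi)$. The domain in $\C^2$ is $T(B)$ for the Forstneri\v{c}--Wold embedding $T\colon X\times\C\to\C^2$, and $A^2(T(B))\cong\HH^2(B,K_B)$ because $L^2$ top-degree forms are biholomorphically invariant. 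That invariance is why the paper works with forms rather than weighted functions. None of these ingredients appears in your plan.
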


\begin{theorem}\label{thm:main}
For integers $n\geq 3$ and $d\geq 1$, there exists a
pseudoconvex domain $\Omega_{n,d}\subset\C^n$ such that
$ \dim_{\C} A^2(\Omega_{n,d})=d$.
\end{theorem}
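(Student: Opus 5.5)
We will deduce Theorem~\ref{thm:main} from Theorem~\ref{thm:main1}, taken as given. The plan has two stages: first raise the dimension of the ambient space while keeping the Bergman space one-dimensional, then combine $d$ such pieces into one domain so that the Bergman space becomes $d$-dimensional.

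\textbf{Stage 1 (raising the dimension).} For $n\ge 3$ we would take a product $\Omega\times D$ or a suitable Hartogs-type fibration over $\Omega$, where $\Omega\subset\C^2$ is the domain of Theorem~\ref{thm:main1}. A product with a bounded domain fails: $A^2(\Omega\times D)=A^2(\Omega)\hat\otimes A^2(D)$ is infinite dimensional. A product with $\C^{n-2}$ gives $\{0\}$. So we instead use a twisted Hartogs domain
\[
\Omega'=\{(z,w)\in\Omega\times\C^{n-2}: |w|^2<e^{-\varphi(z)}\},
\]
with $\varphi$ plurisubharmonic on $\Omega$; such a domain is pseudoconvex. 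Expanding $F(z,w)=\sum_\alpha f_\alpha(z)w^\alpha$ turns $A^2(\Omega')$ into the orthogonal sum of the weighted spaces $A^2(\Omega,e^{-(|\alpha|+n-2)\varphi})$. We would choose $\varphi$ so that $|f_0|^2e^{-(n-2)\varphi}$ is integrable and spans the $\alpha=0$ piece (for instance $\varphi=-\log|f_0|^{2/(n-2)}$ adjusted appropriately, using that $f_0$ has no zeros). The higher weights would then have to kill all $\alpha\neq0$ pieces; one could instead choose $\varphi$ growing enough that this happens. Checking this step requires knowing how the proof of Theorem~\ref{thm:main1} obtains uniqueness, which is presumably through a Liouville-type argument on $\Omega$. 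That is why the first stage is only plausible.

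\textbf{Stage 2 (reaching dimension $d$).} Given pseudoconvex domains $\Omega^{(1)},\dots,\Omega^{(d)}\subset\C^n$ with one-dimensional Bergman spaces, we would build a single domain $\Omega_{n,d}$ that decomposes functionally into the pieces, for instance as a disjoint union made connected. Connecting through thin tubes destroys pseudoconvexity, so we would rather use an extra variable. Set
\[
\Omega_{n,d}=\{(z',t)\in\Omega^{(0)}\times\C:\ t\in \text{a pseudoconvex fiber depending on } z'\},
\]
where $\Omega^{(0)}\subset\C^{n-1}$ has one-dimensional Bergman space (available for $n-1\ge2$ by Theorem~\ref{thm:main1} and Stage 1). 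The fiber would be chosen as a Hartogs annulus-type region $\{e^{-\psi_1}<|t|<e^{\psi_2}\}$ with Laurent expansion $F=\sum_k f_k(z')t^k$. This gives $A^2=\bigoplus_k A^2(\Omega^{(0)},\text{weight}_k)$. Pseudoconvexity requires $\psi_1,\psi_2$ plurisubharmonic. The goal is to tune the weights so that exactly $d$ indices $k$ (say $k=0,\dots,d-1$) admit a one-dimensional weighted space spanned by $f_0^{a_k}$-type functions, while all other indices admit none. The restriction $n\ge3$ fits this scheme: we need $n-1\ge2$ for the base.

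The main obstacle is controlling the weighted Bergman spaces $A^2(\Omega^{(0)},e^{-\lambda\varphi})$ in the exact way needed: exactly one dimension for finitely many $\lambda$ and zero otherwise. This forces the exponent-by-exponent analysis to interact with the specific structure of the example from Theorem~\ref{thm:main1}. I would first try to take $\varphi=-c\log|f_0|$, which is pluriharmonic since $f_0$ is nowhere vanishing. With this choice the fiber conditions become pluriharmonic, so pseudoconvexity is automatic. Multiplication by powers of $f_0$ then intertwines the weighted spaces, reducing each of them to the unweighted $A^2$ of a related domain in the same family as Theorem~\ref{thm:main1}.
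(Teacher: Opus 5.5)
There is a genuine gap. Neither stage is carried out, and the concrete mechanism you propose cannot produce the required dimensions. In Stage 1, with $e^{-(n-2)\varphi}=|f_0|^2$ the weight attached to $w^\alpha$ is $|f_0|^{2(|\alpha|+n-2)/(n-2)}$. For $|\alpha|=j(n-2)$ this equals $|f_0|^{2(j+1)}$. Since $f_0$ has no zeros, $f_0^{-j}w^\alpha\in A^2(\Omega')$, with norm a constant times $\|f_0\|^2$, so $A^2(\Omega')$ is infinite dimensional. Letting $\varphi$ grow goes the wrong way: a larger $\varphi$ makes the weights $e^{-(|\alpha|+n-2)\varphi}$ smaller and the weighted spaces larger. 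Indeed, if $\varphi\ge0$ every $\alpha$-piece contains the $\alpha=0$ piece.

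The pluriharmonic choice you suggest for Stage 2 fails for the same reason. The map $g\mapsto f_0^{j}g$ ($j\in\Z$) sends $A^2(\Omega^{(0)},|f_0|^{2j})$ isometrically onto $A^2(\Omega^{(0)})=\C f_0$. A fibre such as $\{|f_0|^{a}<|t|<2|f_0|^{a}\}$ gives the weight $C_k|f_0|^{2a(k+1)}$ for $t^k$. Each index with $a(k+1)\in\Z$ therefore contributes exactly one dimension. These indices form $\{-1\}$ if $a\notin\mathbb Q$ and an infinite arithmetic progression if $a\in\mathbb Q$, so never exactly $d\ge2$ of them. For $a(k+1)\notin\Z$ there is no single-valued $f_0^{a(k+1)}$, hence no reduction to an unweighted Bergman space. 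Those weighted spaces depend on the Jacobian of the Forstneri\v{c}--Wold embedding, about which nothing is known. (Also, $\{e^{-\psi_1}<|t|<e^{\psi_2}\}$ is pseudoconvex when $-\psi_1,-\psi_2$ are plurisubharmonic, not $\psi_1,\psi_2$.)

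The missing idea is that the $d$ dimensions should come from a one-dimensional factor that is a non-planar open Riemann surface, not from tuning weights. The paper proceeds as follows.
\begin{itemize}
\item It takes $X_d=\{y^2=x^{2d+1}-1\}$, whose smooth compactification $C_d=X_d\cup\{p_\infty\}$ has genus $d$. By Lemma~\ref{lem:removable}, the square-integrable holomorphic $1$-forms on $X_d$ are exactly the $d$ abelian differentials of $C_d$.
\item It takes $D\subset\C^{n-1}$ pseudoconvex with $A^2(D)=\C f_0$ from Theorem~\ref{thm:all-dimensions}. For $n=3$ this is just $\Omega$. In general $D$ is not a ball-fibred Hartogs domain: it uses $n-2$ annular fibres over the punctured elliptic curve, twisted by $a_1,\dots,a_{n-2}$ with $1,a_1,\dots,a_{n-2}$ linearly independent over $\mathbb Q$, so only the coefficient $c_0$ has trivial character.
\item Lemma~\ref{lem:product} then gives $\dim\HH^2(X_d\times D,K_{X_d\times D})=\dim\HH^2(X_d,K_{X_d})=d$.
\item The Forstneri\v{c}--Wold theorem embeds $X_d\times\C^{n-1}$ into $\C^n$ with Runge image. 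Hence the image of the Stein manifold $X_d\times D$ is a pseudoconvex domain in $\C^n$.
\end{itemize}

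Your own observation that Bergman spaces of products are tensor products is exactly the tool needed: tensoring with the one-dimensional space $A^2(D)$ preserves dimension. You discarded products because you only considered Euclidean factors. There, a one-dimensional factor with finite, positive-dimensional Bergman space is ruled out by Wiegerinck's dichotomy for planar domains. This is also why $n\ge3$: $D$ must have dimension at least $2$.
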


These examples disprove Wiegerinck's conjecture in every dimension \(n \geq 2\). Their Bergman spaces are base-point free but fail to separate holomorphic directions, so the aforementioned geometric conjecture of Huang–Li remains open.

\medskip

We conclude this section with the following problem:

\begin{problem}
Let $d \geq 2$ be an integer. Does it exist a pseudoconvex domain in $\C^2$ such that the Bergman space is of dimension $d$?
\end{problem}

\section{The construction of the domain in $\C^2$} 
In this section, we first prove  Theorem \ref{thm:main1}. 

\subsection{A pseudoconvex domain in $\C^2$}

Let $\Lambda=\Z+i\Z$ be a lattice and  $E=\C/\Lambda$ be the elliptic curve.
Let $X=E\setminus\{[0]\}$.
Let
$ H(t)=\zeta(t)-\pi t$ be a holomorphic function on $\C\setminus\Lambda$, where $\zeta(t) =\frac1t+
 \sum_{\lambda\in\Lambda\setminus\{0\}}
 \left(\frac1{t-\lambda}+\frac1\lambda+\frac{t}{\lambda^2}\right)$ is the Weierstrass zeta function associated with $\Lambda$. 
The following lemma collects standard facts on the elliptic curve (cf. \cite{F91, AE}).

The next lemma follows from a direct calculation.  

\begin{lemma}
The differential $dt$ on $\C$ descends to a nowhere vanishing holomorphic
  one-form $\omega$ on $E$, and
  \begin{equation}\label{area}
    \int_X\frac{i}{2}\omega\wedge\overline{\omega}=1.
  \end{equation}
\end{lemma}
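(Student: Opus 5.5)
The plan is to verify the two assertions directly from the definitions, using only that the translations $t\mapsto t+\lambda$, $\lambda\in\Lambda$, are the deck transformations of $\C\to E$.

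\emph{Descent of $dt$.} Write $\pi:\C\to E$ for the quotient map, and cover $E$ by the images of small discs $D\subset\C$ on which $\pi|_D$ is a biholomorphism onto its image. These inverse branches serve as local coordinates $t$ on $E$. On overlaps, two such coordinates differ by a translation, $t'=t+\lambda$ with $\lambda\in\Lambda$ locally constant. Hence $dt'=dt$, so the locally defined forms $dt$ glue to a global holomorphic one-form $\omega$ with $\pi^*\omega=dt$. In each chart $\omega$ equals $1\cdot dt$, so it is nowhere vanishing.

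\emph{The area integral.} First I will remove the puncture. The point $[0]$ has measure zero, so
\[
\int_X\tfrac{i}{2}\,\omega\wedge\overline\omega=\int_E\tfrac{i}{2}\,\omega\wedge\overline\omega .
\]
Next, take the fundamental parallelogram $P=\{x+iy:0\le x<1,\ 0\le y<1\}$. The map $\pi|_P$ is a bijection onto $E$, and its restriction to the interior of $P$ is a biholomorphism onto an open set whose complement, the image of $\partial P$, is a union of two closed curves and so has measure zero. Pulling back along $\pi$ gives
\[
\int_E\tfrac{i}{2}\,\omega\wedge\overline\omega=\int_P\tfrac{i}{2}\,dt\wedge d\bar t .
\]
With $t=x+iy$ we have $dt\wedge d\bar t=(dx+i\,dy)\wedge(dx-i\,dy)=-2i\,dx\wedge dy$, so $\tfrac{i}{2}\,dt\wedge d\bar t=dx\wedge dy$. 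This is Lebesgue measure with the standard orientation induced by the complex structure. The integral therefore equals the area of the unit square, which is $1$, giving \eqref{area}.

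There is no real obstacle here. The only points needing a word of care are the sign and orientation convention, namely that $\tfrac{i}{2}\,dt\wedge d\bar t$ is the positive area form, and the measure-zero justification for discarding the puncture and the boundary of $P$.
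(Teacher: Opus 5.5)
Your proposal is correct and is essentially the paper's argument. The paper states only that the lemma ``follows from a direct calculation,'' and you carry that calculation out: translation invariance of $dt$ under $\Lambda$ gives the descent, and $\frac{i}{2}\,dt\wedge d\bar t=dx\wedge dy$ integrated over a fundamental square of $\Z+i\Z$ gives area $1$, with the puncture and the square's boundary discarded as measure-zero sets.
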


\begin{lemma}\label{lem:elliptic}
\leavevmode
\begin{enumerate}
  \renewcommand{\labelenumi}{\textup{(\roman{enumi})}}
  \setlength{\itemsep}{0.5em}
  \setlength{\parsep}{0pt}

  \item $H$  satisfies
  \begin{equation}\label{eq:periods}
    H(t+1)=H(t),\quad H(t+i)=H(t)-2\pi i.
  \end{equation}
  Consequently $\psi([t])=\Rea H(t)$ is a smooth harmonic function on $X$.

  \item $X$ admits a proper holomorphic embedding $j$ into $\C^2$,
  and $\tau=|j_1|^2+|j_2|^2$ is a smooth strictly subharmonic exhaustion function.
\end{enumerate}
\end{lemma}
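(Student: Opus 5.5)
We prove the two parts separately. Part (i) is a computation with the Legendre relation. Part (ii) combines the Riemann--Roch-type construction of functions on $X$ with the Remmert/Bishop--Narasimhan embedding theory for open Riemann surfaces.

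For (i), we use that $\zeta'=-\wp$ is $\Lambda$-periodic. Hence $\zeta(t+\lambda)-\zeta(t)=\eta(\lambda)$ is a constant, additive in $\lambda$. The quasi-periods are computed from the symmetry of the square lattice and the Legendre relation.

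First, multiplication by $i$ preserves $\Lambda$, so $\zeta(it)=-i\zeta(t)$. Substituting into the definitions gives $\eta(i)=-i\,\eta(1)$.

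Second, the Legendre relation $\eta_1\omega_2-\eta_2\omega_1=2\pi i$ with $\omega_1=1$, $\omega_2=i$ reads $i\eta(1)-\eta(i)=2\pi i$. To fix the orientation and sign, integrate $\zeta$ around the boundary of a fundamental parallelogram containing the simple pole of residue $1$. Combining the two relations gives $2i\eta(1)=2\pi i$, so $\eta(1)=\pi$ and $\eta(i)=-\pi i$.

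Then $H(t+1)-H(t)=\pi-\pi=0$ and $H(t+i)-H(t)=-\pi i-\pi i=-2\pi i$. Since $\Rea H$ is invariant under $\Lambda$, it descends to $X$. It is harmonic there as the real part of a holomorphic function, away from the poles at $\Lambda$, which are exactly the removed point.

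For (ii), $X$ is a once-punctured compact Riemann surface, hence an open Riemann surface and therefore Stein. I take the explicit embedding $j=(\wp,\wp')$. This map is holomorphic on $X$ and satisfies $(\wp')^2=4\wp^3-g_2\wp-g_3$. It maps $X$ bijectively onto the smooth affine cubic $\{y^2=4x^3-g_2x-g_3\}\subset\C^2$; smoothness holds because the roots $e_k$ are distinct.

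\begin{itemize}
\item \emph{Injectivity.} $\wp$ is even of degree $2$, and $\wp'$ is odd. So $\wp(t)=\wp(s)$ forces $s=\pm t$, and if also $\wp'(t)=\wp'(s)$ with $s=-t\neq t$, then $\wp'(t)=0$, i.e.\ $t$ is a half-period and $-t=t$ in $E$.
\item \emph{Immersion.} $(\wp',\wp'')$ never vanishes simultaneously, since at the zeros of $\wp'$ we have $\wp''=2\prod_{k'\neq k}(e_k-e_{k'})\neq0$.
\item \emph{Properness.} $|\wp|\to\infty$ as $[t]\to[0]$.
\end{itemize}

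It follows that $\tau=|j_1|^2+|j_2|^2$ is an exhaustion. It is strictly subharmonic because $\partial\bar\partial\tau=(|j_1'|^2+|j_2'|^2)\,dt\wedge d\bar t$ and $j$ is an immersion.

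The main obstacle is the sign and normalization in the Legendre relation; the rest is routine. I would settle it by the contour integral of $\zeta$ over the positively oriented parallelogram with vertices $a,a+1,a+1+i,a+i$. This integral equals $2\pi i$, and its sides pair up to give exactly $\eta(1)\cdot i-\eta(i)\cdot 1$.
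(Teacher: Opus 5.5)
Your proof is correct and follows the same route as the paper: the explicit embedding $j=(\wp,\wp')$, properness from the pole of $\wp$ at $[0]$, and the standard quasi-period computation for $\zeta$. You supply what the paper leaves to references: the Legendre-relation derivation of $\eta(1)=\pi$, $\eta(i)=-\pi i$ using $\zeta(it)=-i\zeta(t)$; a direct check of injectivity and immersion instead of citing that $[1:\wp:\wp']$ embeds $E$ into $\mathbb P^2$; and the strict subharmonicity of $\tau$, which the paper omits. (The Remmert/Bishop--Narasimhan embedding theory mentioned in your opening paragraph is never actually used, so that sentence can be dropped.)
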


\begin{proof}
(\ref{eq:periods}) is a standard fact on elliptic function (cf. section 7.1 in \cite{AE}). 
It is also a  standard fact that the Weierstrass \(\wp\)-function defined by 
$$ \wp(t)=-\zeta'(t)
 =\frac1{t^2}+
 \sum_{\lambda\ne0}
 \left(\frac1{(t-\lambda)^2}-\frac1{\lambda^2}\right)$$
 is a periodic function and 
satisfies $$ \wp'(t)=-2\sum_{\lambda\in\Lambda}(t-\lambda)^{-3}.$$
It follows that $[t]\mapsto[1:\wp(t):\wp'(t)]$ is a holomorphic embedding from $E$ to 
$\mathbb P^2$ (cf. \cite{F91}). 
Restricting the map to \(X=E\setminus\{[0]\}\) yields a holomorphic embedding $j([t])=(\wp(t),\wp'(t)):  X\to \C^2$. 
Since $\wp(t)=t^{-2}+O(t^2)$ as $t\to0$,  
then \(\lVert j([t])\rVert\to\infty\) and thus $j$ is a proper holomorphic embedding. 
\end{proof}

Since the open Riemann surface $X$ admits a proper holomorphic embedding into $\C^2$, then 
it follows from the celebrated result of Forstneri\v{c}--Wold (cf. Corollary 1.2 in \cite{FW}) that 
$X \times \C$
admits a holomorphic embedding $T$ into $\C^2$ with image $T(X \times \C)$ a Runge domain in $\C^2$..
Define 
\begin{equation*}\label{eq:B}
 B=\{(x,u)\in X\times\C:1<|u|e^{\sqrt{2}\psi(x)}<2\},
\end{equation*}
and let $\Omega=T(B)$. Since $T$ is biholomorphic onto its image,
the restriction $T|_B:B\to\Omega$ is a biholomorphism.

\begin{proposition}\label{lem:psh}
The set $B$ is a connected open set and admits a smooth strictly
plurisubharmonic exhaustion function. Consequently $\Omega=T(B)$ is a
pseudoconvex domain in $\C^2$.
\end{proposition}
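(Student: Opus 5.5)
We will show three things in turn: $B$ is open, $B$ is connected, and $B$ carries a smooth strictly plurisubharmonic exhaustion; pseudoconvexity of $\Omega$ then follows by transport through $T$.

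\emph{Openness.} The function $(x,u)\mapsto |u|e^{\sqrt2\psi(x)}$ is continuous on $X\times\C$, so $B$ is open.

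\emph{Connectedness.} Since $e^{\sqrt2\psi(x)}>0$, the fibre of $B$ over each $x\in X$ is the annulus
\[
A_x=\{u:\ e^{-\sqrt2\psi(x)}<|u|<2e^{-\sqrt2\psi(x)}\},
\]
which is connected. The projection $B\to X$ is open and surjective with connected fibres, and $X$ is connected. More concretely, the continuous section $x\mapsto (x,\tfrac32 e^{-\sqrt2\psi(x)})$ has connected image meeting every fibre, so $B$ is a union of connected sets each meeting a common connected set, hence connected.

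\emph{The exhaustion.} Write $v(x,u)=\log|u|+\sqrt2\psi(x)$ on $X\times\C^*$. Because $\psi$ is harmonic on $X$ by Lemma~\ref{lem:elliptic}(i) and $\log|u|$ is pluriharmonic on $\C^*$, the function $v$ is pluriharmonic, and $B=\{0<v<\log 2\}$. Set
\[
\rho=-\log v-\log(\log 2-v)+\tau(x)+|u|^2 ,
\]
with $\tau$ from Lemma~\ref{lem:elliptic}(ii). We check plurisubharmonicity term by term:
\begin{enumerate}
\item If $h>0$ is pluriharmonic, then $i\partial\bar\partial(-\log h)=i\,\partial h\wedge\bar\partial h/h^2\ge0$. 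Apply this to $h=v$ and $h=\log2-v$; both are positive and pluriharmonic on $B$.
\item The term $\tau(x)$ is strictly plurisubharmonic in the $x$-direction, since $\tau$ is strictly subharmonic on $X$.
\item The term $|u|^2$ is strictly plurisubharmonic in the $u$-direction.
\end{enumerate}
The Levi form of $\tau(x)+|u|^2$ is the direct sum of two positive forms, so it is positive definite on $T(X\times\C)$. Adding the nonnegative forms from the first item, $\rho$ is smooth and strictly plurisubharmonic on $B$.

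\emph{Properness.} This is the step needing the most care. Take a sequence $(x_k,u_k)\in B$ leaving every compact subset of $B$. Then one of two things happens along a subsequence:
\begin{enumerate}
\item $x_k\to\infty$ in $X$, so $\tau(x_k)\to\infty$ because $\tau$ is an exhaustion.
\item $x_k$ stays in a compact $K\subset X$. On $K$ the function $\psi$ is bounded, so the annuli $A_x$ with $x\in K$ lie in a fixed compact subset of $\C^*$, and $u_k$ stays bounded. Hence the sequence can only escape by approaching $\partial B$ over $K$, which forces $v\to0$ or $v\to\log2$, and then $-\log v$ or $-\log(\log2-v)$ tends to $\infty$.
\end{enumerate}
Every term of $\rho$ is bounded below on $B$: $\tau\ge0$, $|u|^2\ge0$, and $-\log v\ge-\log\log2$ and $-\log(\log2-v)\ge-\log\log2$ because $0<v<\log2$. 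So in both cases $\rho\to\infty$, and every sublevel set $\{\rho\le c\}$ is closed in $B$ and stays away from $\partial B$ and from infinity, hence compact.

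\emph{Pseudoconvexity of $\Omega$.} Since $T|_B:B\to\Omega$ is a biholomorphism, $\rho\circ T^{-1}$ is a smooth strictly plurisubharmonic exhaustion of $\Omega$. A domain in $\C^2$ carrying such an exhaustion is pseudoconvex.
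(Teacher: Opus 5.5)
Your proof is correct and follows essentially the same route as the paper: an exhaustion built from $\tau$ plus a barrier in the pluriharmonic function $\log|u|+\sqrt2\psi$ that blows up at $0$ and $\log 2$, with properness coming from the boundedness of $\psi$ on compacta. The differences are cosmetic. You use the barrier $-\log v-\log(\log2-v)$ and add $|u|^2$ to get strictness in the fibre direction, whereas the paper uses $\chi(s)=1/s+1/(\log2-s)$ and gets that strictness from $\chi''>0$ together with $\partial s(0,\eta)=\eta/(2u)\neq0$; you also prove connectedness via a section rather than via the paper's homeomorphism onto $X\times\{1<|v|<2\}$.
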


\begin{proof}
The map $(x,u)\mapsto(x,e^{\sqrt{2}\psi(x)}u)$ is a homeomorphism
from $B$ onto $X\times\{1<|v|<2\}$, so $B$ is connected.
For $0<s<\log 2$, set
\[
 \chi(s)=\frac1s+\frac1{\log 2-s}.
\]
On $B$ define
\[
 s(x,u)=\log|u|+\sqrt{2}\psi(x),\quad
 \rho(x,u)=\tau(x)+\chi(s(x,u)).
\]
The function $s$ is locally pluriharmonic, and
$\chi''(s)=2s^{-3}+2(\log 2-s)^{-3}>0$. Hence for a tangent vector
$V=(\xi,\eta)$, a direct calculation yields 
\[
 (\partial\bar\partial\rho)(V,\overline V) = (\partial\bar\partial\tau)(\xi,\overline \xi) +\chi''(s)|\partial s(V)|^2,
\]
where 
$(\partial\bar\partial\rho)(V,\overline V) =\sum_{j,k}\frac{\partial^2\rho}{\partial z_j\,\partial\bar z_k} V_j\overline{V_k}.$
If $\xi\ne0$, the first term is positive. If $\xi=0$ and
$\eta\ne0$, then $\partial s(V)=\frac{\eta}{2u}\ne0$. Thus $\rho$
is strictly plurisubharmonic.

We now verify that $\rho$ is an exhaustion function. Fix $M>0$. Since
$\tau\ge0$ and $\chi>0$, the inequality $\rho(x,u)\le M$ implies
$ \tau(x)\le M, \chi(s(x,u))\le M.$
Thus $x$ lies in the compact set ${\mathcal S}_M=\{x\in X:\tau(x)\le M\}$.
Moreover, since $\chi(s)\to+\infty$ as $s\to0$ or $s\to\log 2$,
there exists $\delta>0$, depending only on $M$, such that
$ \delta\le s(x,u)\le\log 2-\delta$
in this sublevel set. The function $\psi$ is bounded on
${\mathcal S}_M$, so the identity $\log|u|=s(x,u)-\sqrt{2}\psi(x)$ yields
positive constants $r_M,R_M$ such that
$r_M\le|u|\le R_M$.
Consequently, every sequence in $\{\rho\le M\}$ has a subsequence
converging to a point $(x,u)\in X\times\C^*$. By continuity,
the limit still satisfies
$\delta\le s(x,u)\le\log 2-\delta$, hence belongs to $B$,
and also satisfies $\rho(x,u)\le M$.
Therefore $\{\rho\le M\}$ is compact, proving that $\rho$
is an exhaustion function.

Since
$\rho\circ(T|_B)^{-1}$ is a smooth strictly plurisubharmonic
exhaustion function of $\Omega$, $\Omega$ is a  pseudoconvex domain in $\C^2$.
\end{proof}

\subsection{The Bergman space}

For a complex surface $M$, let $\HH^2(M,K_M)$ denote the holomorphic
$(2, 0)$-forms with finite canonical norm. More precisely,  in local holomorphic coordinates,
$ \beta=b(z_1,z_2)\,dz_1\wedge dz_2 \in \HH^2(M,K_M)$ if and only if 
$ \|\beta\|^2=i^2\int |b|^2\ dz_1 \wedge d \bar z_1 \wedge dz_2 \wedge d \bar z_2  < \infty.$

\medskip

We first point out the following standard fact in complex analysis. 

\begin{lemma}\label{lem:removable}
If $g$ is holomorphic on a punctured disc and square integrable
near its puncture, then it extends holomorphically across it.
\end{lemma}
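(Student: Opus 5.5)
The plan is to use the Laurent expansion together with the orthogonality of monomials on circles. Place the puncture at $0$, so that $g$ is holomorphic on $\{0<|z|<r\}$ and $\int_{0<|z|<r}|g|^2\,dV_2<\infty$. Write
\[
 g(z)=\sum_{k\in\Z}a_kz^k ,
\]
which converges locally uniformly on the punctured disc. The goal is to show that $a_k=0$ for all $k<0$. Once that is done, the series defines a holomorphic function on the full disc $\{|z|<r\}$ that agrees with $g$ off the origin.

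The key step is the orthogonality computation. For $0<\rho<r$, integrate over the circle $|z|=\rho$. Since the series converges uniformly there, Parseval's identity gives
\[
 \int_0^{2\pi}|g(\rho e^{i\theta})|^2\,d\theta=2\pi\sum_k|a_k|^2\rho^{2k}.
\]
Next multiply by $\rho$ and integrate in $\rho$ over $(0,r)$. The summands are nonnegative, so monotone convergence lets us exchange sum and integral:
\[
 \int_{0<|z|<r}|g|^2\,dV_2=2\pi\sum_k|a_k|^2\int_0^r\rho^{2k+1}\,d\rho .
\]
For $k\le -1$ the exponent satisfies $2k+1\le -1$, so $\int_0^r\rho^{2k+1}\,d\rho=+\infty$. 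Finiteness of the left-hand side therefore forces $a_k=0$ for every $k\le-1$.

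An alternative route uses the sub-mean-value inequality to get $|g(z)|=O(1/|z|)$, which shows the singularity is at worst a simple pole. The $L^2$ condition then rules out the pole, because $|z|^{-2}$ is not integrable near $0$ in $\C$. The Laurent argument above is cleaner, so I would present that one.

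There is no serious obstacle here. The only point needing care is justifying the termwise integration, and that is handled by uniform convergence on each circle combined with monotone convergence in the radius. Since the lemma is local, the same argument applies in any local coordinate around a puncture. This is how I expect it to be used later, for instance on $E$ near $[0]$ after writing $(2,0)$-forms in local coordinates.
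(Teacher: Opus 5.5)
Your proof is correct: Parseval on each circle, Tonelli in the radius, and the divergence of $\int_0^r\rho^{2k+1}\,d\rho$ for $k\le -1$ together eliminate all negative Laurent coefficients. The paper gives no proof of this lemma and simply calls it a standard fact; your Laurent--Parseval argument is the standard one, and it is the same mechanism the paper applies fiberwise in the proof of Proposition~\ref{prop:rankone}.
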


\begin{proposition}\label{prop:rankone}
On $B=\{(x,u)\in X\times\C:1<|u|e^{\sqrt{2}\psi(x)}<2\}$, the Bergman space $$ \HH^2(B,K_B)=\C\beta_0, $$
where $ \beta_0=\omega\wedge\frac{du}{u}$ is 
 the nowhere vanishing holomorphic $(2, 0)$-form with finite $L^2$-norm. 
\end{proposition}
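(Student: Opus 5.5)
Write an arbitrary element of $\HH^2(B,K_B)$ as $\beta=g(x,u)\,\omega\wedge du$ with $g\in\OO(B)$; this is possible because $\omega$ is nowhere vanishing on $X$. The plan is to expand $g$ in a Laurent series along the annular fibres and show that only the $u^{-1}$ term survives, with a constant coefficient.

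\textbf{Laurent expansion along the fibres.} For fixed $x$, the fibre of $B$ is the annulus
\[
A_x=\{e^{-\sqrt2\psi(x)}<|u|<2e^{-\sqrt2\psi(x)}\}.
\]
On it we write $g(x,u)=\sum_{n\in\Z}a_n(x)u^n$, where
\[
a_n(x)=\frac1{2\pi i}\oint_{|u|=r}g(x,u)\,u^{-n-1}\,du
\]
for any radius $r$ with $e^{-\sqrt2\psi(x)}<r<2e^{-\sqrt2\psi(x)}$. Since $\psi$ is continuous, near each $x_0$ we can fix one radius $r$ that works for all nearby $x$. Differentiating under the integral then shows that each $a_n$ is holomorphic on $X$.

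\textbf{Computing the norm.} By Fubini, and using \eqref{area} to identify $\frac i2\omega\wedge\bar\omega$ with the area form $dA$ pulled back from the torus, we get $\|\beta\|^2$ up to a positive constant. Orthogonality of the monomials $u^n$ on each circle gives Parseval on each fibre. Put $R(x)=e^{-\sqrt2\psi(x)}$ and compute $\int_{A_x}|u|^{2n}\,dA(u)$ explicitly. For $n\neq-1$ this equals
\[
\frac{\pi(4^{n+1}-1)}{n+1}\,R(x)^{2n+2},
\]
and for $n=-1$ it equals $2\pi\log 2$. With $k=n+1$, the norm becomes
\[
\|\beta\|^2=\sum_n c_n\int_X|a_n|^2e^{-2\sqrt2 k\psi}\,dA,\qquad c_n>0 .
\]
In particular $\beta_0$, which has $a_{-1}\equiv1$ and all other coefficients zero, has norm $2\pi\log2$ up to the normalizing constant. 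So $\beta_0$ is finite, nonzero and nowhere vanishing.

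\textbf{Killing the coefficients.} Finiteness of $\|\beta\|$ forces, for every $n$,
\[
\int_X|a_n|^2e^{-2\sqrt2k\psi}\,dA<\infty .
\]
Lift $a_n$ to a $\Lambda$-periodic holomorphic function $a_n(t)$ on $\C\setminus\Lambda$ and set
\[
F(t)=a_n(t)\,e^{-\sqrt2 k H(t)} .
\]
This is single-valued and holomorphic on $\C\setminus\Lambda$, and $|F|^2=|a_n|^2e^{-2\sqrt2k\psi}$. By \eqref{eq:periods},
\[
F(t+1)=F(t),\qquad F(t+i)=e^{2\pi i\sqrt2 k}F(t).
\]
In particular $|F|$ is $\Lambda$-periodic, so $F$ is square integrable on a punctured neighbourhood of every lattice point. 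Lemma \ref{lem:removable} then extends $F$ to an entire function. This step deliberately avoids controlling the possibly essential singularity of $a_n$ at the puncture: the exponential factor is absorbed into $F$ before applying removability.

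Since $|F|$ is doubly periodic and continuous, $F$ is bounded, hence constant by Liouville. If $k\neq0$, then $e^{2\pi i\sqrt2k}\neq1$ because $\sqrt2 k\notin\Z$, so the constant must be $0$ and $a_n\equiv0$. If $k=0$, then $F=a_{-1}$ is constant. Hence $\beta=c\,\omega\wedge\frac{du}{u}=c\,\beta_0$.

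The main obstacle I expect is the analytic justification of the expansion step: holomorphy of $a_n$ in $x$ when the annuli move with $x$, and the use of Fubini/Parseval for a series of nonnegative terms. Both should be routine once local constant radii are chosen.
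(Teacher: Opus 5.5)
Your proof is correct and is essentially the paper's argument: your twisted coefficient $F=a_n e^{-\sqrt2 kH}$ (with $k=n+1$) is exactly the paper's Laurent coefficient $c_k(t)$ in the straightened fibre variable $v=e^{\sqrt2 H(t)}u$. The remaining steps also coincide with the paper's (Parseval, removability at the lattice points, Liouville, and the character $e^{2\pi i\sqrt2 k}$ forcing $k=0$); the only difference is that you expand in $u$ on the moving annuli and twist afterwards, whereas the paper straightens the annuli on the cover first and then expands in $v$.
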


\begin{proof}
It is a direct calculation by the Fubini theorem and (\ref{area}) that $\beta_0$ has finite $L^2$-norm. 
Assume $\beta \in  \HH^2(B,K_B)$. Then there exists a unique $F\in\OO(B)$ such that $\beta = F\beta_0$. 
 Lift everything to $\C\setminus\Lambda$ and set $ v=e^{\sqrt{2}H(t)}u.$
The lifted domain becomes
$(\C\setminus\Lambda)\times\{1<|v|<2\}$, and
\begin{equation}\label{eq:formchange}
 dt\wedge\frac{du}{u}=dt\wedge\frac{dv}{v}.
\end{equation}
 In new coordinates, let
\[
 \widehat F(t,v)=F([t],e^{-\sqrt{2}H(t)}v)
              =\sum_{k\in\Z}c_k(t)v^k, 
\]
where each $c_k$ is holomorphic on $\C\setminus\Lambda$.

Denote  the closed fundamental square for the lattice \(\Lambda=\mathbb Z+i\mathbb Z\) by \(Q\), centered at the origin:
\[
Q=\left\{x+iy:\ -\frac12\le x\le\frac12,\quad
                     -\frac12\le y\le\frac12\right\}.
\]
Let $Q_*=Q^\circ\setminus\{0\}$. The quotient map
$t\mapsto[t]$ identifies $Q_*$ with $X$ outside the image of
the boundary of $Q$, which has area zero. We may therefore
compute the norm by integrating over $Q_*$.
Under the change of variables $v=e^{\sqrt{2}H(t)}u$, the fiber
over each $t$ becomes the fixed annulus $1<|v|<2$. Moreover, it follows from 
\eqref{eq:formchange} that 
\[
 F\beta_0=\widehat F(t,v)\,dt\wedge\frac{dv}{v}.
\]
Consequently,
\[
 \|F\beta_0\|^2
 =\int_{Q_*}\int_{1<|v|<2}
       |\widehat F(t,v)|^2\,\frac{dA(v)}{|v|^2}\,dA(t) =\int_{Q_*}\int_1^2\int_0^{2\pi}
       |\widehat F(t,re^{i\theta})|^2\,d\theta\,
       \frac{dr}{r}\,dA(t),
\]
where  $v=re^{i\theta}$. 
For each fixed $t\in Q_*$ and $1<r<2$, the Laurent expansion
converges uniformly on the circle $|v|=r$ and takes the form
\[
 \widehat F(t,re^{i\theta})
 =\sum_{k\in\Z}c_k(t)r^k e^{ik\theta}.
\]
Parseval's identity then yields 
\[
 \int_0^{2\pi}|\widehat F(t,re^{i\theta})|^2\,d\theta
 =2\pi\sum_{k\in\Z}|c_k(t)|^2r^{2k}.
\]
It follows that 
\begin{equation}\label{eq:norm}
 \|F\beta_0\|^2
 =2\pi\int_{Q_*}\int_1^2
       \sum_{k\in\Z}|c_k(t)|^2r^{2k-1}\,dr\,dA(t) =\sum_{k\in\Z}C_k\int_{Q_*}|c_k(t)|^2\,dA(t),
\end{equation}
where
\[
 C_k=2\pi\int_1^2r^{2k-1}\,dr
 =\begin{cases}
   2\pi\log2,&k=0,\\
   \frac{\pi}{k}(2^{2k}-1),&k\ne0.
  \end{cases}
\]
By \eqref{eq:norm}, we have
\[
 \int_{Q_*}|c_k(t)|^2\,dA(t)
 \le \frac{\|F\beta_0\|^2}{C_k}<\infty.
\]
In particular, each $c_k$ is square integrable near the
puncture $t=0$.

On the other hand, it follows from \eqref{eq:periods} that 
\[
 \widehat F(t+1,v)=\widehat F(t,v),\quad
 \widehat F(t+i,e^{-2\pi i\sqrt{2}}v)=\widehat F(t,v).
\]
Uniqueness of Laurent coefficients therefore yields
\begin{equation}\label{eq:characters}
 c_k(t+1)=c_k(t),\quad
 c_k(t+i)=e^{2\pi i\sqrt{2}k}c_k(t).
\end{equation}
Since $\beta = F\beta_0 \in  \HH^2(B,K_B)$, each $c_k$ is 
square integrable near zero by \eqref{eq:norm}. It follows from Lemma~\ref{lem:removable} that $c_k$ extends 
across zero, and thus  extends across every
lattice point by \eqref{eq:characters}. Therefore $c_k$ is entire.
By \eqref{eq:characters}, $c_k$ is bounded on $\C$ and thus a constant by Liouville's theorem. 
If $c_k$ is a non-zero constant, 
then $\sqrt{2}k\in\Z$ by \eqref{eq:characters} again, which forces
$k=0$. Thus every coefficient except for $c_0$ vanishes and $c_0$
is constant. It follows that $F$ is constant.
\end{proof}

\subsection{Proof of Theorem~\ref{thm:main1}}

\begin{proof}[Proof of Theorem~\ref{thm:main1}]
Since $T: B \to \Omega$ is biholomorphic, two Bergman spaces $ A^2(\Omega)$ and
$\HH^2(B, K_B)$ are isomorphic. Write $Z=(Z_1,Z_2)$ for coordinates on $\C^2$.
Then $f_0$ is given by $ f_0 dZ_1\wedge dZ_2=(T^{-1})^*\beta_0$ for some non-vanishing holomorphic function $f_0$ on $\Omega$. It is easy to verify that $f_0 \in L^2(\Omega)$.
\end{proof}

\section{The domain in $\C^n$}\label{sec:all-dimensions}
\subsection{One-dimensional Bergman space}
More generally, it follows from the construction in the dimension 2 that 
 for every
$n\ge2$ there is a pseudoconvex domain $\Omega_n\subset\C^n$ 
with one dimensional Bergman space.

\begin{theorem}\label{thm:all-dimensions}
For every integer $n\ge2$, there exists a pseudoconvex domain
$\Omega_n\subset\C^n$ and a nowhere vanishing holomorphic function
$f_n$ on $\Omega_n$ with  finite $L^2$-norm such that $A^2(\Omega_n)=\C f_n$. 
\end{theorem}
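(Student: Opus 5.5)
We extend the two-dimensional construction by adding $n-2$ further annular factors, each twisted by its own irrational multiple of $\psi$. Fix real numbers $a_1=\sqrt2,a_2,\dots,a_{n-1}$ chosen so that $1,a_1,\dots,a_{n-1}$ are linearly independent over $\mathbb Q$; for instance $a_j=\sqrt{p_j}$ with distinct primes $p_j$. Set
\[
 B_n=\{(x,u_1,\dots,u_{n-1})\in X\times\C^{n-1}:1<|u_j|e^{a_j\psi(x)}<2,\ j=1,\dots,n-1\}.
\]
To realize $B_n$ in $\C^n$ we embed $X\times\C^{n-1}$ holomorphically into $\C^n$. The map $T\times\mathrm{id}_{\C^{n-2}}$ embeds $X\times\C\times\C^{n-2}$ into $\C^2\times\C^{n-2}=\C^n$, since $T$ embeds $X\times\C$ into $\C^2$. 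Put $\Omega_n=(T\times\mathrm{id})(B_n)$. Pseudoconvexity follows from the proof of Proposition~\ref{lem:psh}. On $B_n$ we take
\[
 \rho_n=\tau(x)+\sum_j\chi(s_j)+\sum_j|u_j|^2,\qquad s_j=\log|u_j|+a_j\psi(x).
\]
The Levi form now contains the additional terms $\sum_j\chi''(s_j)|\partial s_j(V)|^2$. When $\xi=0$, these terms are positive in each $\eta_j$ direction. The exhaustion argument is the same as before: on a sublevel set $x$ lies in ${\mathcal S}_M$, so $\psi$ is bounded there, and hence each $|u_j|$ is pinched between positive constants. Connectedness follows from the homeomorphism onto $X\times\{1<|v|<2\}^{n-1}$.

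For the Bergman space we consider $\beta=\omega\wedge\frac{du_1}{u_1}\wedge\dots\wedge\frac{du_{n-1}}{u_{n-1}}$. By Fubini and \eqref{area}, $\beta$ has finite norm $(2\pi\log2)^{n-1}$, and it is nowhere vanishing. Any other $L^2$ holomorphic $n$-form can be written as $F\beta$. Lifting to $\C\setminus\Lambda$ and substituting $v_j=e^{a_jH(t)}u_j$ turns the domain into $(\C\setminus\Lambda)\times A^{n-1}$, where $A$ is the annulus. We then expand
\[
 \widehat F=\sum_{k\in\Z^{n-1}}c_k(t)v^k.
\]
Parseval on the torus $|v_j|=r_j$ gives the analogue of \eqref{eq:norm}:
\[
 \|F\beta\|^2=\sum_k C_{k}\int_{Q_*}|c_k|^2\,dA,
\]
where each $C_k=\prod_j C_{k_j}>0$. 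So every $c_k$ is $L^2$ near $0$, and Lemma~\ref{lem:removable} applies.

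From \eqref{eq:periods}, $c_k(t+1)=c_k(t)$ and $c_k(t+i)=e^{2\pi i\sum_j a_jk_j}c_k(t)$. Hence $c_k$ extends across all lattice points, is bounded, and is therefore constant by Liouville. A nonzero constant $c_k$ would force $\sum_j a_jk_j\in\Z$, and the $\mathbb Q$-independence of $1,a_1,\dots,a_{n-1}$ then gives $k=0$. Thus $F$ is constant.

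Finally we transfer to $\Omega_n$ exactly as in the proof of Theorem~\ref{thm:main1}: write $f_n\,dZ_1\wedge\dots\wedge dZ_n=((T\times\mathrm{id})^{-1})^*\beta$. The main obstacle is this rationality step, namely ensuring that no nonzero multi-index produces a trivial character. A single $\sqrt2$ in every factor would fail, because $k=(1,-1)$ gives a trivial character. This is why the $a_j$ must be chosen rationally independent together with $1$. A simpler alternative, twisting only the first factor by $\sqrt2$ and leaving the other factors untwisted, would fail for the same reason: an untwisted annular factor admits every $v_j^{k}$ with trivial character.
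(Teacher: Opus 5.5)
Your proposal is correct and follows the paper's proof essentially step by step. It uses the same domain $B_n$ with $1,a_1,\dots,a_{n-1}$ rationally independent, the same embedding $T\times\mathrm{id}_{\C^{n-2}}$ (the paper's $T_n$), the same exhaustion and Levi-form argument, and the same Laurent-coefficient, character and Liouville argument forcing $k=0$; the differences (the harmless extra term $\sum_j|u_j|^2$ in $\rho_n$, and your explicit choice $a_j=\sqrt{p_j}$ where the paper leaves the $a_j$ unspecified) are cosmetic.
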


\begin{proof}
The case $n=2$ has been proved. For $n\ge3$, put $m=n-1$ and
retain $X$, $\omega$, $H$, $\psi$, $\tau$, and the embedding $T$
from the two-dimensional construction. Choose real numbers
$a_1=\sqrt2,a_2,\ldots,a_m$ such that
$ 1,a_1,\ldots,a_m$ 
are linearly independent over $\mathbb Q.$
Define
\begin{equation*}\label{eq:Bn}
 B_n=\left\{(x,u_1,\ldots,u_m)\in X\times\C^m:
       1<|u_j|e^{a_j\psi(x)}<2\ \text{for }1\le j\le m\right\},
\end{equation*}
and set $\Omega_n=T_n(B_n)$, where
\begin{equation*}\label{eq:Tn}
 T_n(x,u_1,\ldots,u_m)=\bigl(T(x,u_1),u_2,\ldots,u_m\bigr)
\end{equation*}
is a holomorphic embedding of $X\times\C^m$ into $\C^n$ by applying Corollary 1.2 in \cite{FW}.
The change of variables $v_j=e^{a_j\psi(x)}u_j$ identifies $B_n$
homeomorphically with $X\times\{1<|v|<2\}^m$, so $\Omega_n$
is a domain. With $\chi$ as in the proof of Lemma~\ref{lem:psh},
the same Levi-form and compact-sublevel arguments show that
\[
 \rho_n(x,u)=\tau(x)+\sum_{j=1}^m\chi(s_j(x,u)),\quad
 s_j(x,u)=\log|u_j|+a_j\psi(x),
\]
is a smooth strictly plurisubharmonic exhaustion function of $B_n$.
Thus $\Omega_n$ is pseudoconvex.

Write $\HH^2(B_n,K_{B_n})$ for the square-integrable holomorphic
$n$-forms on $B_n$. The nowhere vanishing form
$ \beta_n=\omega\wedge\frac{du_1}{u_1}\wedge\cdots
                    \wedge\frac{du_m}{u_m}$
has finite $L^2$-norm by Fubini's theorem, as in
Proposition~\ref{prop:rankone}. Every element of
$\HH^2(B_n,K_{B_n})$ is uniquely $F\beta_n$ with $F\in\OO(B_n)$.
On the covering space, use $v_j=e^{a_jH(t)}u_j$ and expand
\[
 \widehat F(t,v)
 =F([t],e^{-a_1H(t)}v_1,\ldots,e^{-a_mH(t)}v_m)
 =\sum_{k\in\Z^m}c_k(t)v^k,
\]
where $ v^k=\prod_{j=1}^m v_j^{k_j}$. 
The coefficients are holomorphic on $\C\setminus\Lambda$, and
\eqref{eq:periods} yields
\begin{equation*}\label{eq:multi-characters}
 c_k(t+1)=c_k(t),\quad
 c_k(t+i)=e^{2\pi i\sum_{j=1}^m a_jk_j}c_k(t).
\end{equation*}
The identity \eqref{eq:formchange} extends to
\[
 dt\wedge\bigwedge_{j=1}^m\frac{du_j}{u_j}
 =dt\wedge\bigwedge_{j=1}^m\frac{dv_j}{v_j}.
\]
The same argument as in the proof of  Proposition~\ref{prop:rankone} implies that every
$c_k$ is square integrable near zero. It follows the same argument that $c_k=0$ for $k \not=0$.
Consequently
\[
 \HH^2(B_n,K_{B_n})=\C\beta_n.
\]

Finally, define the nowhere vanishing holomorphic function $f_n$
on $\Omega_n$ by
\[
 T_n^*(f_n\,dZ_1\wedge\cdots\wedge dZ_n)=\beta_n.
\]
The same biholomorphic change of variables used in dimension two
identifies $A^2(\Omega_n)$ with $\HH^2(B_n,K_{B_n})$.
Hence $f_n$ has finite $L^2$-norm and $A^2(\Omega_n)=\C f_n$.
\end{proof}

\subsection{Bergman space with an arbitrary dimension}

Now we construct the pseudoconvex domain in $\C^n$ with prescribed Bergman space dimension. First, we recall a standard fact in Bergman geometry.

\begin{lemma}\label{lem:product}
Let $Y$ be an open Riemann surface and let $D\subset\C^m$ be a domain
with $A^2(D)=\C f_0$, where $\|f_0\|_{L^2(D)}=1$. Set $ \beta_0=f_0(w)\,dw_1\wedge\cdots\wedge dw_m.$ 
Writing $\pi_Y$ and $\pi_D$ for the projections of $Y\times D$ to $Y$ and $D$, respectively, then the map $ \alpha\mapsto\pi_Y^*\alpha\wedge\pi_D^*\beta_0$ 
is an isometric isomorphism from $\HH^2(Y,K_Y)$ onto
$\HH^2(Y\times D,K_{Y\times D})$.
\end{lemma}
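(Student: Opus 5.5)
Isometry is immediate from Fubini. If $\alpha=a(z)\,dz$ in a local coordinate $z$ on $Y$, then $\pi_Y^*\alpha\wedge\pi_D^*\beta_0=a(z)f_0(w)\,dz\wedge dw_1\wedge\cdots\wedge dw_m$. Its pointwise norm density is $|a(z)|^2|f_0(w)|^2$ times the product volume form. Integrating first over $D$ and using $\|f_0\|=1$ gives $\|\pi_Y^*\alpha\wedge\pi_D^*\beta_0\|^2=\|\alpha\|^2$. The argument is coordinate-free because the canonical norm on a product of canonical bundles is the product of the canonical norms, which can be checked on a cover of $Y$ by coordinate charts. In particular the map is well defined, linear, isometric and hence injective.

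For surjectivity, take $\gamma\in\HH^2(Y\times D,K_{Y\times D})$. In a coordinate chart $V\subset Y$ with coordinate $z$, write $\gamma=G(z,w)\,dz\wedge dw$ with $G$ holomorphic on $V\times D$. For fixed $z$, the slice $w\mapsto G(z,w)$ is holomorphic on $D$. By Fubini, $\int_V\int_D|G(z,w)|^2\,dV(w)\,dA(z)<\infty$, so $G(z,\cdot)\in A^2(D)$ for almost every $z\in V$.

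The main obstacle is upgrading ``almost every $z$'' to every $z$ together with holomorphic dependence on $z$. I would use the mean-value inequality on small polydiscs: for a disc $\Delta\Subset V$ around $z_0$ and compact $K\subset D$, $\sup_{z\in\Delta',\,w\in K}|G|$ is controlled by the $L^2$ norm of $G$ on $\Delta\times D$. More directly, I would apply the Bergman projection. Define $a(z)=\langle G(z,\cdot),f_0\rangle_{L^2(D)}$ wherever the slice is in $A^2(D)$. Where it is, $G(z,\cdot)=a(z)f_0$, since $A^2(D)=\C f_0$.

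To get this for all $z$, fix $w_0\in D$ with $f_0(w_0)\neq0$. Then $a(z)=G(z,w_0)/f_0(w_0)$ for almost every $z$, and the right-hand side is holomorphic in $z$. So $G(z,w)-G(z,w_0)f_0(w)/f_0(w_0)$ is holomorphic on $V\times D$ and vanishes for almost every $z$, hence identically. Thus $G(z,w)=a(z)f_0(w)$ with $a$ holomorphic on $V$. Setting $\alpha=a\,dz$ locally, these pieces patch to a global holomorphic $1$-form on $Y$, since $\gamma$ and $\beta_0$ are global and $f_0\not\equiv0$ makes the factorization unique.

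Finally $\|\alpha\|=\|\gamma\|<\infty$ by the isometry computation, so $\alpha\in\HH^2(Y,K_Y)$ maps to $\gamma$. The map is therefore an isometric isomorphism onto $\HH^2(Y\times D,K_{Y\times D})$.
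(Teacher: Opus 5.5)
The paper gives no proof of this lemma. It is quoted as a standard fact in Bergman geometry, whose usual justification is the tensor-product decomposition $\HH^2(Y\times D,K_{Y\times D})\cong\HH^2(Y,K_Y)\widehat{\otimes} A^2(D)$, equivalently the product formula for Bergman kernels. Your proof is correct and self-contained, and it amounts to the rank-one case of that decomposition done by hand.

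The general route needs completeness of products of orthonormal bases. You avoid this by using $A^2(D)=\C f_0$ to get $G(z,\cdot)=a(z)f_0$ for almost every $z$. Evaluating at a point $w_0$ with $f_0(w_0)\neq0$ then does two jobs at once:
\begin{enumerate}
\item it gives a holomorphic representative $G(z,w_0)/f_0(w_0)$ of the coefficient;
\item it upgrades ``almost every $z$'' to every $z$, because $G(z,w)-G(z,w_0)f_0(w)/f_0(w_0)$ is continuous and vanishes on $E\times D$ with $V\setminus E$ null, hence everywhere.
\end{enumerate}
So the mean-value estimate on polydiscs you mention is unnecessary, and your patching argument via $f_0\not\equiv0$ is fine.

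The one thing to state explicitly is the normalization. The map is an exact isometry when every canonical norm is normalized so that $\|g\,dz_1\wedge\cdots\wedge dz_k\|^2=\int|g|^2\,dV_{2k}$; on $Y$ this means $\|\alpha\|^2=\int_Y\frac{i}{2}\alpha\wedge\overline{\alpha}$, as in \eqref{area}. Under the convention the paper uses for surfaces, $i^2\int|b|^2\,dz_1\wedge d\bar z_1\wedge dz_2\wedge d\bar z_2$, you only get an isometry up to a constant factor. This is harmless for the dimension count in Theorem~\ref{thm:main}.
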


Now we are ready to prove Theorem \ref{thm:main}.

\begin{proof}[Proof of Theorem~\ref{thm:main}]
Fix $n\geq3$ and $d\geq1$. Consider the smooth affine algebraic curve
\[
 X_d=\{(x,y)\in\C^2:y^2=x^{2d+1}-1\}.
\]
Let \(C_d\) be the smooth compactification of \(X_d\), obtained by adjoining the unique point at infinity \(p_\infty\).
It follows from the Riemann--Hurwitz formula that the genus of $C_d$ is given by
\[
 2g(C_d)-2=2(-2)+(2d+2)=2d-2.
\]
It follows from Lemma \ref{lem:removable} that
every square-integrable holomorphic one-form on $X_d$ extends
holomorphically across $p_\infty$. 
Conversely, every holomorphic one-form on
$C_d$ is square integrable on $X_d$.  Thus the following isomorphism holds: 
$$ H^0(C_d,K_{C_d})\cong\HH^2(X_d,K_{X_d}),$$
and 
\begin{equation}\label{eq:curve-space}
 \dim_{\C}\HH^2(X_d,K_{X_d})=d
 \end{equation}
 holds by  the standard equality
$\dim H^0(C_d,K_{C_d})=g(C_d)$ (cf. \cite{F91}).

By Theorem \ref{thm:all-dimensions},
there is a pseudoconvex domain $D\subset\C^{n-1}$ with
$ A^2(D)=\C f_0, \|f_0\|_{L^2(D)}=1.$
Since the inclusion
\[
 \theta:X_d\longrightarrow\C^n,
 \quad \theta(x,y)=(x,y,0,\ldots,0),
\]
is a proper algebraic embedding. It follows from Corollary 1.5 of
\cite{FW} that there exists a holomorphic
embedding
\[
 T:X_d\times\C^{n-1}\longrightarrow\C^n
\]
whose image is a Runge domain. Define
\begin{equation*}\label{eq:domain}
 \Omega_{n,d}=T(X_d\times D).
\end{equation*}
Since $T$ is biholomorphic from $X_d\times D$ to $\Omega_{n,d}$, $\Omega_{n,d}$ is a
connected open subset of $\C^n$.

Since \(X_d\) and \(D\) are Stein, the product \(X_d\times D\) is Stein. Hence \(\Omega_{n,d}\) is Stein and therefore a pseudoconvex domain in $\C^n$.
Moreover, it follows from Lemma~\ref{lem:product} and
\eqref{eq:curve-space} that 
\begin{align*}
 \dim_{\C} A^2(\Omega_{n,d})
 =\dim_{\C}\HH^2(X_d\times D,K_{X_d\times D})=\dim_{\C}\HH^2(X_d,K_{X_d})=d.
\end{align*}
This proves the theorem.
\end{proof}

\begin{remark}
A basis of $H^0(C_d,K_{C_d})$ is given by 
$ \alpha_j=\frac{x^{j-1}\,dx}{y}, 1\leq j\leq d.$
If $Z_1,\ldots,Z_n$ are the coordinates on the target $\C^n$ and
$\beta_0=f_0(w)\,dw_1\wedge\cdots\wedge dw_{n-1}$, the functions
$F_1,\ldots,F_d$ determined by
\[
 (T|_{X_d\times D})^*
   \bigl(F_j\,dZ_1\wedge\cdots\wedge dZ_n\bigr)
 =\pi_{X_d}^*\alpha_j\wedge\pi_D^*\beta_0
\]
form a basis of $A^2(\Omega_{n,d})$.
\end{remark}

\bigskip

{\bf Acknowledgement.}
The author is grateful to Xiaojun Huang for introducing him to this field and for his support and encouragement over the years. He also thanks Ming Xiao for helpful discussions. 
The author was partially supported by Zhejiang
Provincial Natural Science Foundation of China (Grant No. LQKWL26A0201).

\medskip

{\bf AI Statement} ChatGPT 6 Astra was used to improve the exposition of the manuscript and to assist in developing some of the mathematical arguments. The author takes full responsibility for the mathematical content of the paper.

\noindent Yuan Yuan, yuanyuan@westlake.edu.cn, Institute for Theoretical Sciences, Westlake University,\\
Hangzhou 310024, Zhejiang, China\\


\begin{thebibliography}{99}

\bibitem{AE} J. V. Armitage, W. F. Eberlein, Elliptic functions,  London Mathematical Society Student Texts, 67. Cambridge University Press, Cambridge, 2006. xiv+387 pp. ISBN: 978-0-521-78078-0; 978-0-521-78563-1; 0-521-78563-4

\bibitem{BGNX}
S. Bhat, S. Ganguly, A. Nandi amd M. Xiao, {\em Abundance of Bergman metrics with
constant positive holomorphic sectional curvature}, arXiv:2605.17105

\bibitem{BZ20}
Z. Błocki, W. Zwonek, {\em Generalizations of the higher dimensional Suita conjecture and its relation with a problem of Wiegerinck}, J. Geom. Anal. 30 (2020), no. 2, 1259–1270.


\bibitem{B21}
B.~J.~Boudreaux,
\emph{On the dimension of the Bergman space of some Hartogs domains
with higher dimensional bases},
J. Geom. Anal. \textbf{31} (2021), no.~8, 7885--7899.


\bibitem{D07}
Ż. Dinew, {\em The Ohsawa-Takegoshi extension theorem on some unbounded sets}, 
Nagoya Math. J. 188 (2007), 19–30.

\bibitem{E07}
M. Engliš, {\em Singular Berezin transforms}, Complex Anal. Oper. Theory 1 (2007), no. 4, 533–548.

\bibitem{FP22}
J. E. Fornæss, R. Pal, {\em Notes on the short $\C^k$'s}, J. Geom. Anal. 32 (2022), no. 4, Paper No. 133, 22 pp. 

\bibitem{FP25}
J. E. Fornæss, R. Pal, {\em Increasing sequences of complex manifolds with uniform squeezing constants and their Bergman spaces}, 
Math. Ann. 392 (2025), no. 1, 781–796.



\bibitem{F91} O. Forster, Lectures on Riemann surfaces, Translated from the 1977 German original by Bruce Gilligan. Reprint of the 1981 English translation. Graduate Texts in Mathematics, 81. Springer-Verlag, New York, 1991. viii+254 pp. ISBN: 0-387-90617-7

\bibitem{FW}
F.~Forstneri\v{c} and E.~F.~Wold,
\emph{Runge tubes in Stein manifolds with the density property},
Proc. Amer. Math. Soc. \textbf{148} (2020), no.~2, 569--575.


\bibitem{GHH}
A.-K.~Gallagher, T.~Harz, and G.~Herbort,
\emph{On the dimension of the Bergman space for some unbounded domains},
J. Geom. Anal. \textbf{27} (2017), no.~2, 1435--1444.

\bibitem{GGV}
A.-K. Gallagher, P.  Gupta, L. Vivas, {\em On the dimension of bundle-valued Bergman spaces on compact Riemann surfaces}, Indiana Univ. Math. J. 73 (2024), no. 4, 1541–1549.




\bibitem{HK26}
C. Hayashida, J.  Kamimoto, {\em Unbounded Reinhardt domains with finite-dimensional Bergman spaces in $\C^n$},  
J. Geom. Anal. 36 (2026), no. 2, Paper No. 53, 8 pp.


\bibitem{HL}
X.~Huang and S.-Y.~Li, with an appendix by J.~N.~Treuer,
\emph{Bergman metrics with constant holomorphic sectional curvatures},
J. Reine Angew. Math. 822 (2025), 203--220.


\bibitem{HL2026}
X.~Huang and S.-Y.~Li,
\emph{On point separation of Bergman space on Stein manifolds with
constant holomorphic sectional curvature}, arXiv:2602.06813v4

\bibitem{JP05}
M. Jarnicki, P. Pflug, {\em Invariant distances and metrics in complex analysis-revisited}, Dissertationes Math. 430 (2005), 192 pp.

\bibitem{J12}
P.~Jucha,
\emph{A remark on the dimension of the Bergman space of some Hartogs domains},
J. Geom. Anal. \textbf{22} (2012), no.~1, 23--37.

\bibitem{KKS}
L. Koltai, A. Kubasch, R. Szőke, {\em Bergman spaces on algebraic curves}, Math. Z. 313 (2026), no. 2, Paper No. 15, 23 pp.




\bibitem{PZ07} P. Pflug, W. Zwonek, {\em $L^2_h$-domains of holomorphy in the class of unbounded Hartogs domains}, Illinois J. Math. 51 (2007), no. 2, 617–624.

\bibitem{PZ17} P. Pflug, W. Zwonek, {\em $L^2_h$-functions in unbounded balanced domains}, J. Geom. Anal. 27 (2017), no. 3, 2118–2130.


\bibitem{S22} R. Szőke, {\em On a theorem of Wiegerinck}, Anal. Math. 48 (2022), no. 2, 581–587.

\bibitem{Wiegerinck}
J. O.~Wiegerinck,
\emph{Domains with finite dimensional Bergman space},
Math. Z. \textbf{187} (1984), no.~4, 559--562.

\bibitem{Y26}
A. Yamamori, {\em Origin rigidity induced by monomial structure of Bergman spaces on
Reinhardt domains}, J. Funct. Anal. 291 (2026), no. 1, Paper No. 111490, 11 pp.



\end{thebibliography}
\end{document}